\theoremstyle{theorem}
\newtheorem{theorem}{Theorem}
\theoremstyle{definition}
\newcommand{\RR}{\mathbb{R}}
\newcommand{\NN}{\mathbb{N}}
\newcommand{\EE}{\mathbb{E}}
\newcommand{\ee}{{\rm e}}
\title{Bernstein approximation and beyond: proofs by means of elementary probability theory}  
\author{Tiangang Cui and Friedrich Pillichshammer}
\begin{document}

\maketitle

\begin{abstract}
Bernstein polynomials provide a constructive proof for the Weierstrass approximation theorem, which states that every continuous function on a closed bounded interval can be uniformly approximated by polynomials with arbitrary accuracy. Interestingly the proof of this result can be done using elementary probability theory. This way one can even get error bounds for Lipschitz functions. In this note, we present these techniques and show how the method can be extended naturally to other interesting situations. As examples, we obtain in an elementary way results for the Sz\'{a}sz-Mirakjan operator and the Baskakov operator.      
\end{abstract}

\section{Bernstein approximation}

For $n \in \NN$ and $k \in \{0,1,\ldots,n\}$ the $k$-th Bernstein basis polynomial of degree $n$ is defined as
\begin{eqnarray*}
\beta_{n}(k,x) := \binom{n}{k} x^k (1-x)^{n-k} \quad \mbox{for $x \in [0,1]$}.
\end{eqnarray*}
For a continuous function $f:[0,1] \to \mathbb{R}$ and $n \in \NN$, Bernstein~\cite{b12} constructed an approximation scheme in the form of 
\begin{equation*}
B_n(f;x) := \sum_{k=0}^n  f\left(\frac{k}{n}\right) \beta_{n}(k,x), 
\end{equation*}
to prove the Weierstrass approximation theorem. His proof is based on methods from elementary probability theory (see also \cite[Proposition~5.2]{EL}). Kac~\cite{K38} gave a formula for the approximation error for Lipschitz continuous functions. These results are also discussed by Math\'{e} in \cite{M99}, which is worth reading also because of the interesting historical comments. 

\begin{theorem}\label{th0}
Assume that a function $f:[0,1]\rightarrow \mathbb{R}$ is $\alpha$-H\"older continuous, i.e., $f \in C^{0,\alpha}([0,1])$ with $\alpha \in (0,1]$, such that 
\begin{equation}\label{Hcond1}
|f(x)-f(y)| \le L |x-y|^\alpha\quad \mbox{for all $x,y \in [0,1]$,}
\end{equation}
for some real constant $L>0$. Then for all $n \in \mathbb{N}$ and all $x \in [0,1]$ we have
\begin{equation*}
|f(x)- B_n(f;x)| \le L \left( \frac{x(1-x)}{n}\right)^{\alpha/ 2}.
\end{equation*}
\end{theorem}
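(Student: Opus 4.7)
The plan is to exploit the probabilistic interpretation of $B_n(f;x)$ that is already hinted at in the introduction. Concretely, fix $x \in [0,1]$ and let $S_n$ denote a binomial random variable with parameters $n$ and $x$, so that $\Pr(S_n = k) = \beta_n(k,x)$ for $k \in \{0,1,\ldots,n\}$. Then by the definition of $B_n(f;x)$,
\begin{equation*}
B_n(f;x) = \EE\!\left[f\!\left(\frac{S_n}{n}\right)\right],
\end{equation*}
and since $f(x)$ is deterministic, $f(x) - B_n(f;x) = \EE\!\left[f(x) - f(S_n/n)\right]$.

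Next, I would pull the absolute value inside the expectation and invoke the H\"{o}lder hypothesis \eqref{Hcond1}, which gives
\begin{equation*}
|f(x) - B_n(f;x)| \le \EE\!\left[\left|f(x) - f\!\left(\frac{S_n}{n}\right)\right|\right] \le L\, \EE\!\left[\left|x - \frac{S_n}{n}\right|^{\alpha}\right].
\end{equation*}
The task then reduces to bounding the absolute $\alpha$-th moment of $x - S_n/n$.

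The key idea here is Jensen's inequality applied to the concave function $t \mapsto t^{\alpha/2}$ on $[0,\infty)$ (concavity is immediate since $\alpha/2 \in (0,1/2]$). Writing $|x - S_n/n|^\alpha = ((x-S_n/n)^2)^{\alpha/2}$ and using Jensen, I obtain
\begin{equation*}
\EE\!\left[\left|x - \frac{S_n}{n}\right|^{\alpha}\right] \le \left(\EE\!\left[\left(x - \frac{S_n}{n}\right)^{\!2}\right]\right)^{\!\alpha/2}.
\end{equation*}
Finally, $\EE[S_n/n] = x$, so the quantity inside the parentheses is the variance of $S_n/n$, which equals $x(1-x)/n$ by the standard binomial variance formula. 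Substituting back yields the claimed estimate.

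The only subtle point is the application of Jensen's inequality; for $\alpha = 1$ one could instead simply use the Cauchy--Schwarz inequality, but the formulation via concavity covers all $\alpha \in (0,1]$ uniformly and keeps the argument completely elementary. Everything else reduces to well-known properties of the binomial distribution (mean $nx$ and variance $nx(1-x)$), so no genuine obstacle arises.
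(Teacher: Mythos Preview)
Your proof is correct and follows essentially the same route as the paper's own argument: the probabilistic interpretation of $B_n(f;x)$ as $\EE[f(K/n)]$ for a binomial $K$, the H\"older condition to pass to $\EE[|x-K/n|^{\alpha}]$, and then a bound of this $\alpha$-th absolute moment by the variance raised to the power $\alpha/2$. The only cosmetic difference is that the paper phrases the last step as H\"older's inequality with exponents $2/\alpha$ and $2/(2-\alpha)$, whereas you phrase it as Jensen's inequality for the concave map $t\mapsto t^{\alpha/2}$; the two yield the identical bound $\EE[|x-K/n|^{\alpha}]\le(\mathrm{Var}[K/n])^{\alpha/2}$.
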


We briefly revisit the proof of this result according to Math\'{e}~\cite{M99} which uses elementary arguments from probability theory. Thereby we make some minor reformulations that allow us to get a clearer picture of the overall situation, which in turn makes it clear how the method can be expanded.

\begin{proof}[Proof of Theorem~\ref{th0}]
Let  $x \in [0,1]$ be the success probability of a Bernoulli experiment and $n \in \mathbb{N}$ be the total number of experiments. The number of successful Bernoulli experiments, which is a discrete random variable denoted by $K$, follows the binomial distribution with the probability mass function
$$\beta_{n}(k,x)= \binom{n}{k} x^k (1-x)^{n-k}\quad \mbox{for $k \in \{0,1,\ldots,n\}$}.$$ 
The probability of $K=k$ is given by the $k$-th Bernstein basis polynomial. We have the expectation $\EE[K]=n x$ and the variance ${\rm Var}[K]= n x (1-x)$. 

For a function $f:[0,1]\rightarrow \mathbb{R}$ we have the identity
\[
\mathbb{E}\left[f\left(\frac{K}{n}\right)\right] = \sum_{k = 0}^n f\left(\frac{k}{n}\right) \binom{n}{k} x^k (1-x)^{n-k} = B_n(f;x), 
\]
and thus
$$
|f(x)-B_n(f;x)| = \left|\EE\left[f(x)- f\left(\frac{K}{n}\right)\right]\right| \le  \EE\left[\left|f(x)- f\left(\frac{K}{n}\right)\right|\right].
$$
Assuming that $f\in C^{0,\alpha}([0,1])$ and using the above inequality, we then have
\begin{equation}\label{th0eq1}
|f(x)-B_n(f;x)| \leq L \, \EE\left[\left|x- \frac{K}{n}\right|^{\alpha}\right] = \frac{L}{n^{\alpha}} \EE\left[\left|n x- K\right|^{\alpha}\right].
\end{equation}
Applying H\"older's inequality with parameters $2/\alpha$ and $2/(2-\alpha)$, we obtain
\begin{equation}\label{th0eq2}
\EE\left[\left|n x- K\right|^{\alpha}\right] \leq \left(\EE\left[\left|n x- K\right|^2\right]\right)^{\alpha/2}  =  {\rm Var}[K]^{\alpha/2} = \left(n x (1-x)\right)^{\alpha/2} .
\end{equation}
Substituting \eqref{th0eq2} into \eqref{th0eq1}, the result follows. 
\end{proof}

The key insight we gain here is that one can exploit the equivalence between discrete probability distributions and certain basis functions to construct approximation schemes. This note aims to demonstrate how this method can be easily adapted to other discrete probability distributions and thereby to other approximation schemes rather than that based on Bernstein basis polynomials. This approach leads in an easy way to results for the Sz\'{a}sz-Mirakjan operator and the Baskakov operator.

\section{Other approximations schemes}
We aim to approximate a continuous function $f:[0,\infty)\rightarrow \mathbb{R}$ using a discrete random variable $K$ taking values in $\NN_0$. Let the probability mass function of $K$ be $p_{n}(k,x)$, where $n \in \NN$ and $x \in [0,\infty)$ are parameters. Assuming $\EE[K]=n x$ and ${\rm Var}[K]< \infty$, we define the approximation scheme
$$S_n(f;x):=\sum_{k = 0}^{\infty} f\left(\frac{k}{n}\right) p_{n}(k,x).$$ 
The following theorem estimates the approximation error of $S_n(f;x)$.
\begin{theorem}\label{th1}
Assume that a function $f:[0,\infty)\rightarrow \mathbb{R}$ satisfies a general H\"older-type condition, 
\begin{equation}\label{Hcond4}
|f(x)-f(y)| \le L \, \frac{|x-y|^{\alpha}}{(\gamma+x+y)^{\beta}} \quad \mbox{for all $x,y \in [0,\infty)$,}
\end{equation}
for real constants $L>0$, $\alpha \in (0,1]$ and $\beta,\gamma \in [0, \infty)$, then for all $n \in \mathbb{N}$ and all $x \in (0,\infty)$ we have 
\begin{equation*}
|f(x)-S_n(f;x)| \le  \frac{L}{n^{\alpha}} \frac{{\rm Var}[K]^{\alpha/2}}{(\gamma+x)^\beta},
\end{equation*}
where $K$ is a $\NN_0$-valued random variable with probability mass function $p_{n}(k,x)$.
\end{theorem}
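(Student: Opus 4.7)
The plan is to follow the template of the proof of Theorem~\ref{th0} as closely as possible, and use the monotonicity of the denominator $(\gamma+x+y)^\beta$ in $y$ to absorb the extra factor. First I would rewrite the approximation error as an expectation with respect to the random variable $K$ taking values in $\NN_0$ with probability mass function $p_n(k,x)$. Since $\sum_{k=0}^\infty p_n(k,x)=1$ and $S_n(f;x)=\EE[f(K/n)]$, the triangle inequality gives
\begin{equation*}
|f(x)-S_n(f;x)| = \left|\EE\!\left[f(x)-f\!\left(\tfrac{K}{n}\right)\right]\right| \le \EE\!\left[\left|f(x)-f\!\left(\tfrac{K}{n}\right)\right|\right].
\end{equation*}

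Next, I would insert the generalized H\"older-type bound \eqref{Hcond4} with $y=K/n$. Because $K\ge 0$, we have $\gamma+x+K/n \ge \gamma+x>0$, and therefore
\begin{equation*}
\frac{1}{(\gamma+x+K/n)^\beta} \le \frac{1}{(\gamma+x)^\beta}
\end{equation*}
almost surely. This deterministic bound on the denominator is the one cosmetic change compared to Theorem~\ref{th0}, and it pulls out of the expectation cleanly. Substituting yields
\begin{equation*}
|f(x)-S_n(f;x)| \le \frac{L}{(\gamma+x)^\beta}\, \EE\!\left[\left|x-\tfrac{K}{n}\right|^\alpha\right] = \frac{L}{n^\alpha(\gamma+x)^\beta}\, \EE\!\left[|nx-K|^\alpha\right].
\end{equation*}

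Finally, exactly as in \eqref{th0eq2}, since $\alpha\in(0,1]$ I would apply H\"older's inequality (equivalently, Jensen's inequality for the concave function $t\mapsto t^{\alpha/2}$) with the pair $2/\alpha$ and $2/(2-\alpha)$ to get
\begin{equation*}
\EE\!\left[|nx-K|^\alpha\right] \le \left(\EE\!\left[|nx-K|^2\right]\right)^{\alpha/2} = \mathrm{Var}[K]^{\alpha/2},
\end{equation*}
where the last equality uses the hypothesis $\EE[K]=nx$. Combining the two displayed inequalities gives precisely the claimed bound.

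I do not expect any real obstacle: the proof is a direct transcription of the Bernstein argument, with the only new ingredient being the pointwise monotonicity of $(\gamma+x+y)^{-\beta}$ in $y\ge 0$, which is used to replace the random denominator $(\gamma+x+K/n)^\beta$ by the deterministic $(\gamma+x)^\beta$. The assumption $x\in(0,\infty)$ (rather than $x\in[0,\infty)$) is only needed in the degenerate case $\gamma=0$ to keep the right-hand side finite; the finiteness of $\mathrm{Var}[K]$ is built into the setup of Section~2.
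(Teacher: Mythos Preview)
Your proposal is correct and follows essentially the same route as the paper's own proof: rewrite $S_n(f;x)$ as $\EE[f(K/n)]$, apply the H\"older-type bound inside the expectation, use $K/n\ge 0$ to replace $(\gamma+x+K/n)^\beta$ by $(\gamma+x)^\beta$, and then bound $\EE[|nx-K|^\alpha]$ by ${\rm Var}[K]^{\alpha/2}$ via H\"older's inequality exactly as in \eqref{th0eq2}. The only difference is expository --- you make the monotonicity step for the denominator explicit, whereas the paper compresses it into a single displayed inequality.
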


For a function $f$ satisfying the general H\"older-type condition \eqref{Hcond4}, we denote it by $f \in C^{0, \alpha, \beta, \gamma}([0, \infty))$. The constant $\gamma$ is used to avoid singularities at the origin in error estimates. With increasing $\beta$, we impose a stronger decaying rate on $f$ toward its tail. With $\beta = 0$, the general H\"older-type condition reduces to the plain H\"older condition
\begin{equation}\label{Hcond5}
|f(x)-f(y)| \le L |x-y|^\alpha\quad \mbox{for all $x,y \in [0,\infty)$,}
\end{equation}
for $L>0$ and $\alpha \in (0,1]$. In this case, the error estimate in Theorem \ref{th1} becomes 
$$
|f(x)-S_n(f;x)| \le  \frac{L}{n^{\alpha}} {\rm Var}[K]^{\alpha/2}.
$$ 

\begin{proof}[Proof of Theorem~\ref{th1}]
Let $f : [0,\infty) \rightarrow \mathbb{R}$ be a continuous function and $K \in \NN_0$ be a discrete random variable with probability mass function $p_{n}(k,x)$, where $n \in \NN$ and $x \in (0,\infty)$. We have
\[
\mathbb{E}\left[f\left(\frac{K}{n}\right)\right] = \sum_{k = 0}^{\infty} f\left(\frac{k}{n}\right) p_{n}(k,x)=S_n(f;x),
\]
and thus
\[
|f(x)-S_n(f;x)| = \left|\EE\left[f(x)- f\left(\frac{K}{n}\right)\right]\right| \le  \EE\left[\left|f(x)- f\left(\frac{K}{n}\right)\right|\right].
\]
Assuming $f \in C^{0, \alpha, \beta, \gamma}([0, \infty))$ and using the above inequality, we have 
\begin{equation}\label{th1eq1}
|f(x)-S_n(f;x)| \leq L \, \EE\left[\frac{\left|x- \frac{K}{n}\right|^{\alpha}}{\left(\gamma + x + \frac{K}{n}\right)^{\beta}}\right] \leq \frac{L}{n^{\alpha}} \frac{\EE\left[\left|nx- K\right|^{\alpha}\right]}{\left(\gamma+x\right)^{\beta}}.
\end{equation}
Following the argument in \eqref{th0eq2}, we have $\EE\left[\left|n x- K\right|^{\alpha}\right] \leq {\rm Var}[K]^{\alpha/2}$, and therefore the result follows. 
\end{proof}

As an application we obtain results for the Sz\'{a}sz-Mirakjan operator and the Baskakov operator.

\paragraph{Sz\'{a}sz-Mirakjan operator.}  The Sz\'{a}sz-Mirakjan operator \cite{sza} is based on the Poisson distribution $P_{\lambda}$ with parameter $\lambda>0$, which has the probability mass function
\[
p(k) = \ee^{-\lambda} \frac{\lambda^k}{k!}\quad \mbox{for $k \in \NN_0$.}
\]
If $K$ is distributed according to $P_\lambda$, which is written as $K \sim P_\lambda$, then it is well known that $\mathbb{E}[K] = {\rm Var}[K]=\lambda$. Now, for $n \in \NN$ and $x \in [0,\infty)$ we define the parameter $\lambda=n x$ and let $K\sim P_{nx}$. This defines the basis functions 
$$
p_{n}(k,x) = \ee^{-n x} \frac{(n x)^k}{k!} \quad \mbox{for $k \in \NN_0$.}
$$
Then, for $f \in C^{0,\alpha, \beta, \gamma}([0,\infty))$ and $n \in \NN$ we have the approximation scheme 
$$S_n(f;x):=\sum_{k = 0}^{\infty} f\left(\frac{k}{n}\right) \ee^{-n x} \frac{(n x)^k}{k!},$$ 
which is known as Sz\'{a}sz-Mirakjan operator; see, e.g., \cite{sza,VT} for a detailed study using analytic methods. From Theorem~\ref{th1} we obtain its error estimate
$$
|f(x)- S_n(f;x)| \le \frac{L}{n^{\alpha/2}} \frac{x^{\alpha/2}}{(\gamma+x)^\beta},
$$
and some specific cases.
\begin{enumerate}
\item If $f:[0,\infty) \to \mathbb{R}$ satisfies the plain H\"older condition \eqref{Hcond5}, then for all $n \in \mathbb{N}$ and all $x \in (0,\infty)$ we have
\begin{equation*}
|f(x)- S_n(f;x)| \le \frac{L}{n^{\alpha/2}}\, x^{\alpha/2}  .
\end{equation*}
\item If $f$ satisfies the general H\"older-type condition \eqref{Hcond4} with $\gamma = 0$ and $\beta = \alpha/2$, then for all $n \in \mathbb{N}$ and uniformly for all $x \in (0,\infty)$ we have the bound 
\begin{equation}\label{SMOp_unif}
|f(x)-S_n(f;x)| \le \frac{L}{n^{\alpha/2}} .
\end{equation}
\item If $f$ satisfies the general H\"older-type condition \eqref{Hcond4} with $\beta > \alpha/2$, for all $n \in \mathbb{N}$ and for all $x \in (0,\infty)$ we have the bound
\begin{equation*}%\label{SMOp_unif2}
|f(x)-S_n(f;x)| \le \frac{L}{n^{\alpha/2}} (\gamma+x)^{-\beta + \alpha/2},
\end{equation*} 
which decays toward the tail as $x$ increases. This also yields a similar uniform upper bound as that in \eqref{SMOp_unif}.
\end{enumerate}

\paragraph{Baskakov operator.}

The Baskakov operator \cite{b57} is based on the Pascal distribution $PC_{n,x}$ (also known as negative binomial distribution) with parameters $n \in \NN$ and $x \in [0,\infty)$, which has the probability mass function 
$$p_{n}(k,x)=\binom{n+k-1}{k} \frac{x^k}{(1+x)^{n+k}} \quad \mbox{for $k \in \NN_0$}.$$ 
If $K \sim PC_{n,x}$, then we have $\EE[K] = n x$ and ${\rm Var}[K]= nx(1+x)$. Then, for $f \in C^{0,\alpha,\beta,\gamma}([0,\infty))$ and $n \in \NN$ we have  the approximation scheme 
$$S_n(f;x)=\sum_{k = 0}^{\infty} f\left(\frac{k}{n}\right) \binom{n+k-1}{k} \frac{x^k}{(1+x)^{n+k}}.$$ 
This operator is known as Baskakov operator in literature; see, e.g., \cite{b57,VT}. From Theorem~\ref{th1} we obtain its error estimate
$$
|f(x)- S_n(f;x)| \le \frac{L}{n^{\alpha/2}} \frac{(x(x+1))^{\alpha/2}}{(\gamma+x)^\beta},
$$
and the following specific cases. 
\begin{enumerate}
\item If $f:[0,\infty) \to \mathbb{R}$ satisfies the plain H\"older condition \eqref{Hcond5}, then for all $n \in \mathbb{N}$ and all $x \in (0,\infty)$ we have
\begin{equation*}
|f(x)- S_n(f;x)| \le L \left( \frac{x(x+1)}{n}\right)^{\alpha/ 2}.
\end{equation*}
\item If $f$ satisfies the general H\"older-type condition \eqref{Hcond4} with $\gamma=0$ and $\beta = \alpha/2$, then for all $n \in \mathbb{N}$ and all $x \in (0,\infty)$ we have 
\begin{equation*}
|f(x)-S_n(f;x)| \le  L \left( \frac{x+1}{n}\right)^{\alpha/2}.
\end{equation*}
\item With a stronger decay rate $\beta\geq \alpha$, we can also extend the convergence rate of the Baskakov operator uniformly for all $x\in [0, \infty)$. For example, with $\gamma = 1$ and $\beta = \alpha$, for all $n \in \mathbb{N}$ and uniformly for all $x \in (0,\infty)$ we have
\begin{equation*}
|f(x)-S_n(f;x)| \le  \frac{L}{n^{\alpha/2}}.
\end{equation*}
\end{enumerate}

\paragraph{Final remark and examples.}
For both Sz\'{a}sz-Mirakjan and Baskakov operators, it is possible to bound the error $|f(x)-S_{n}(f;x)|$ uniformly for all $x \in [0, \infty)$ for functions with a sufficiently large decay rate $\beta$. However, their convergence still follows a rate of $n^{-\alpha/2}$, and thus we can recover a convergence rate of $n^{-1/2}$ with $\alpha = 1$ as the best-case scenario.

To numerically implement these operators, we also require a truncation in $k$ in the approximation. For $m \in \NN$, we have the truncated approximation scheme
\begin{equation}\label{trunc_approx}
\widetilde{S}_{n,m}(f;x):=\sum_{k=0}^{m} f\left(\frac{k}{n}\right) p_{n}(k,x).
\end{equation}
The error of the approximation \eqref{trunc_approx} has an apparent upper bound
$$
|f(x)-\widetilde{S}_{n,m}(f;x)| \le  |f(x)-S_{n,m}(f;x)| + |S_n(f;x)-\widetilde{S}_{n,m}(f;x)|,
$$
in which the first term of the bound is given by Theorem \ref{th1} and the second term of the bound (the {\it truncation error}) satisfies 
\begin{eqnarray}
|S_n(f;x)-\widetilde{S}_{n,m}(f;x)| & = & \left| \sum_{k > m} f\left(\frac{k}{n}\right) p_{n}(k,x) \right| \nonumber \\
& \leq & \sup_{x\in(m/n, \infty)} \big| f(x) \big| \, \sum_{k > m} p_{n}(k,x) \nonumber \\
& = & \sup_{x\in(m/n, \infty)} \big| f(x) \big| \, \mathbb{P}[K>m]. \label{trunc_error}
\end{eqnarray}

We first consider a function $f \in C^{0,\alpha,\beta,\gamma}([0,\infty))$ with a sufficiently large $\beta$ that leads to a uniform error bound $|f(x)-S_{n,m}(f;x)| \leq L \, n^{-\alpha/2}$ in the untruncated approximation. For $x \in (m/n,\infty)$, we further assume that $f$ satisfies a tail condition $|f(x)| = O(g(x))$ for $x \rightarrow \infty$ for some strictly decreasing function $g: (0,\infty) \rightarrow \mathbb{R}^{+}$. Applying the bound in \eqref{trunc_error} and $\mathbb{P}[K>m] \leq 1$, the truncation error satisfies 
$$
|S_n(f;x)-\widetilde{S}_{n,m}(f;x)| = O(g(m/n)).
$$
Then, using $m = \lceil n g^{-1}(n^{-\alpha/2}) \rceil$, the error of the truncated approximation scheme satisfies $|f(x)-\widetilde{S}_{n,m}(f;x)| = O(n^{-\alpha/2}).$ The following are some examples. 
\begin{enumerate}
\item The function $f(x) = (1+x^2)^{-1}$ satisfies the general H\"{o}der condition \eqref{Hcond4} with $\alpha = 1$ and $\beta = 1$. 
We also have $f(x) = O(x^{-2})$ for $x \rightarrow \infty$. Thus, we can choose $m = \lceil n^{5/4} \rceil$ so that the truncated approximation satisfies $|f(x)-\widetilde{S}_{n,m}(f;x)| = O(n^{-1/2}).$
\item The function $f(x) = \exp(-x)$ satisfies the general H\"{o}der condition \eqref{Hcond4} with $\alpha = 1$ and $\beta = 1$. Thus, we can choose $m = \lceil n \log(n) / 2 \rceil$ so that the truncated approximation satisfies $|f(x)-\widetilde{S}_{n,m}(f;x)| = O(n^{-1/2}).$

\end{enumerate}

If the function $f:[0,\infty) \rightarrow \RR$ does not necessarily satisfy a tail condition to guide the truncation in $k$, but is at least bounded, then we can use an alternative argument from elementary probability theory. Assuming  $\|f\|_{\infty}:=\sup_{x\in [0,\infty)}|f(x)|< \infty$, the bound of the truncation error in \eqref{trunc_error} also leads to
$$
|S_n(f;x)-\widetilde{S}_{n,m}(f;x)| \le \|f\|_{\infty} \ \mathbb{P}[K>m],
$$ 
in which $\mathbb{P}[K>m]$ can be estimated using Chebyshev's inequality. Choosing $m \ge 2 \EE[K]$ we have 
\begin{eqnarray*}
\mathbb{P}[K>m] \le \mathbb{P}\Big[\big|K-\EE[K] \big|\ge \EE[K]\Big] \le \frac{{\rm Var}[K]}{\EE[K]^2}.
\end{eqnarray*}
In the case of the Sz\'{a}sz-Mirakjan operator, we have $\EE[K]={\rm Var}[K]=n x$, and thus choosing $m= 2 \lceil n x \rceil$ we obtain $$|S_n(f;x)-\widetilde{S}_{n,m}(f;x)| \le \|f\|_{\infty} \frac{1}{n x}.$$
In the case of the Baskakov operator, we have $\EE[K]=nx$ and ${\rm Var}[K]=n x (1+x)$. Hence, choosing again $m= 2 \lceil n x \rceil$ we obtain $$|S_n(f;x)-\widetilde{S}_{n,m}(f;x)| \le \|f\|_{\infty} \frac{1}{n} \frac{1+x}{x}.$$

\iffalse
So the truncation error depends mostly on the decay rate of the probability mass function $p_{n,x}$ when $k \rightarrow \infty$.

For example, in the case of the truncated Sz\'{a}sz-Mirakjan operator we have
$$\sum_{k>M} p_{n,x}(k) = \ee^{-n x} \sum_{k>M} \frac{(n x)^k}{k!} = \ee^{- n x} \frac{\ee^{\xi}}{(M+1)!} (n x)^{M+1},$$
where $\xi \in (0,nx)$. For the last identity we used the Lagrange form of the remainder of the exponential series. Using $$\frac{1}{n!} < \frac{1}{\sqrt{2 \pi n}} \left(\frac{\ee}{n}\right)^n \quad \mbox{for $n \in \NN$,}$$ which follows easily from Stirlings formula, we obtain
$$\sum_{k>M} p_{n,x}(k) \le \frac{1}{\sqrt{2 \pi (M+1)}}  \left(\frac{n x \ee}{M+1}\right)^{M+1}.$$ Choosing $M=\lfloor 2 {\rm e} x n\rfloor$ we therefore obtain $$\sum_{k>M} p_{n,x}(k) \le \frac{1}{\sqrt{2 \pi (\lfloor 2 {\rm e} x n\rfloor+1)}} \left(\frac{1}{2}\right)^{2 {\rm e} x n}.$$
\fi

\end{document}